\documentclass[11pt,a4paper]{article}
\usepackage[T1]{fontenc}
\usepackage[utf8]{inputenc}
\usepackage{lmodern}
\usepackage[left=27mm,right=27mm,top=25mm,bottom=27mm]{geometry}
\usepackage{amsmath,amssymb,amsthm,mathtools}
\usepackage{microtype}
\usepackage{enumitem}
\usepackage[authoryear,round]{natbib} 
\usepackage{xcolor}
\usepackage{hyperref}
\hypersetup{
  colorlinks=true,
  linkcolor=blue!45!black,
  citecolor=blue!45!black,
  urlcolor=blue!45!black,
  pdftitle={Couplings with prescribed maxima and a short proof of the characterization of quasi-copulas},
  pdfauthor={},
  pdfsubject={A constructive maximum coupling and the classical track characterization}
}
\newtheorem{theorem}{Theorem}
\newtheorem{lemma}[theorem]{Lemma}
\newtheorem{corollary}[theorem]{Corollary}

\theoremstyle{remark}
\newtheorem*{remark}{Remark}
\newcommand{\I}{[0,1]}
\newcommand{\PP}{\mathbb{P}}

\newcommand{\R}{\mathbb{R}}
\newcommand{\Pp}{\mathbb{P}}

\newcommand{\doi}[1]{\href{https://doi.org/#1}{doi:\,\nolinkurl{#1}}}

\definecolor{JAcolor}{RGB}{190,30,45}

\title{Couplings with prescribed maxima: A short proof of the quasi-copula characterization}

\author{
Jonathan Ansari\\[0.4em]
\small Department of Mathematics\\
\small Paris Lodron University Salzburg, Austria
}

\date{October 21, 2026}

\begin{document}
\maketitle
\vspace{-2.2em}

\begin{abstract}
We provide a short probabilistic proof of the classical characterization of multivariate quasi-copulas, addressing the call for a simpler and more transparent proof expressed by \cite{Sempi2017}. Our approach is based on a coupling construction for random variables with fixed marginal distributions and a prescribed distribution of their maximum. In our main result, we show that such a coupling exists if and only if the maximum is stochastically no smaller than each component and the distribution of the maximum is dominated by the sum of the marginal measures. The construction applies to arbitrary distributions and relies only on a decomposition of the maximum law and monotone quantile couplings. A direct consequence of our main result is the existence of copulas with prescribed values along an increasing track.
\end{abstract}

\noindent\textit{Keywords:} copulas; coupling; distributional transform; maximum distribution; optimal transport; quasi-copula.

\section{Introduction}
\label{sec:intro}

Quasi-copulas were introduced by \citet{ANS1993}
in the bivariate case and extended to arbitrary
dimensions by \citet{NQSS1996}. They generalize
copulas by retaining their boundary conditions,
coordinatewise monotonicity, and Lipschitz
continuity, without requiring $d$-increasingness.
Quasi-copulas arise naturally as pointwise infima
and suprema of families of copulas, which need not
be copulas themselves; see \citet{NQRLU2004}.
Consequently, they play an important role in
dependence modeling, for instance, in deriving bounds for certain functional under
partial dependence information. Examples include
bounds for copulas with prescribed measures of
association, such as Kendall's tau or Spearman's
rho \citep{NQRLU2001}, as well as model-independent
price bounds for multi-asset derivatives
\citep{LP2017,ALNS2024}.
Applications to fuzzy aggregation are discussed
in \citet{AMB2020}.


Originally, quasi-copulas were defined as functions that agree along any increasing
track connecting $\mathbf{0}$ and $\mathbf{1}$
in $[0,1]^d$ with a copula, which may depend on the track.
Theorem \ref{thm:quasi} below is the classical result of \citet{GQRS1999} for
$d=2$ and \citet{CT2001} for arbitrary $d$. It shows that the original definition is equivalent to
a simple analytic characterization in terms
of uniform marginals, coordinatewise monotonicity,
and an $\ell^1$-Lipschitz condition.
This characterization is now commonly used
as the definition of quasi-copulas.

Although the analytic conditions are elementary,
the converse implication is considerably more involved:
one must show that, for every increasing track,
a function satisfying these conditions agrees with
some copula along this track.
 The multivariate proof of
\citet{CT2001} relies on finite interpolation
problems, constructions involving partial
measures, and a compactness argument.
In his survey, \citet[Section~3]{Sempi2017}
explicitly expressed the hope that this proof
would eventually be replaced by a simpler
and more transparent one.

In this paper, we provide a short probabilistic proof of this
classical characterization. Our approach is based on Theorem \ref{thm:maximum}---the main result of this paper---which characterizes the existence of coupling for random variables with given marginal distributions
and a prescribed distribution of their maximum.
Distributions of order statistics under
marginal information have a substantial
history; see, e.g., \citet{Rychlik1994,Rychlik1996}.
Related results include constructions of
copulas with prescribed diagonal sections
\citep{Jaworski2009,DFT2014} and copulas
of order statistics \citep{LD2014}.

A closely related problem is the
optimal transport problem studied by
\citet{BKLPE2022}, where a cost functional is maximized over the class of couplings with prescribed marginal and maximum distributions.
Their work provides a transport-duality approach,
whereas our Corollary \ref{cor_max} yields a simple
feasibility criterion and the existence of
couplings without solving an optimization problem.


The remainder of the paper is organized
as follows. Section~\ref{sec2} presents the main
coupling result. Section~\ref{sec3} discusses its
connection to the maximum transport problem.
Section~\ref{sec4} derives the existence of copulas
with prescribed values along a track.
Section~\ref{sec:quasi} establishes the classical
characterization of quasi-copulas by a
short probabilistic argument. Finally,
Section~\ref{sec:proofs} provides the constructive proof
of Theorem~\ref{thm:maximum}.

\section{Main result}\label{sec2}

The following result is the probabilistic statement underlying this paper.
For a distribution function $F$ on $\R$, write $\mu_F$ for its probability
measure. For finite Borel measures, $\alpha\le\beta$ means
$\alpha(B)\le\beta(B)$ for every Borel set $B$.

\begin{theorem}[Couplings with prescribed maximum]
\label{thm:maximum}
Let $d\ge2$, and let $F_1,\ldots,F_d,G$ be distribution
functions on $\R$. The following are equivalent:
\begin{enumerate}[label = (\roman*)]
    \item There exists a random vector $(X_1,\ldots,X_d)$ such that \(X_i\sim F_i\) for all \(i\in \{1,\ldots,d\}\) and \(\max_{1\le i\le d}X_i\sim G\).
    \item \label{eq:feasible} \(G(t)\le F_i(t)\) for all \(t\in\R\) and \(i\in \{1,\ldots,d\}\) and \(\mu_G\le\sum_{i=1}^d\mu_{F_i}\).
\end{enumerate}
\end{theorem}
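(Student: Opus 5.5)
\emph{Necessity.} Let $M:=\max_{1\le i\le d}X_i$. Since $M\ge X_i$ pointwise, $\{M\le t\}\subseteq\{X_i\le t\}$, so $G(t)\le F_i(t)$ for all $t$ and $i$. Since $M$ always coincides with some component, for every Borel set $B$ we have $\{M\in B\}\subseteq\bigcup_i\{X_i\in B\}$. Hence $\mu_G(B)\le\sum_i\PP(X_i\in B)=\sum_i\mu_{F_i}(B)$. Both conditions in \ref{eq:feasible} follow.

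\emph{Sufficiency: decomposing the maximum law.} Put $\sigma:=\sum_{i=1}^d\mu_{F_i}$. Since $\mu_G\le\sigma$, the Radon--Nikodym density $h:=d\mu_G/d\sigma$ exists and can be chosen with $0\le h\le1$. Define $\nu_i:=h\,\mu_{F_i}$. Then $\sum_i\nu_i=h\sigma=\mu_G$ and $\nu_i\le\mu_{F_i}$, so the measures $\rho_i:=\mu_{F_i}-\nu_i$ are nonnegative. The measure $\nu_i$ will be the law of $M$ on the event that component $i$ attains the maximum.

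Take $M\sim G$ and a random index $I$ with $\PP(I=i\mid M)=(d\nu_i/d\mu_G)(M)$. Set $X_I:=M$. For $i\neq I$ we must produce $X_i\le M$ such that, on the event $\{I\neq i\}$, the variable $X_i$ has law $\rho_i$. Then
\[
\PP(X_i\in\cdot)=\nu_i+\rho_i=\mu_{F_i},
\]
and $\max_jX_j=M$ automatically, as long as $X_j\le M$ for all $j$.

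\emph{Sufficiency: the monotone couplings.} On $\{I\neq i\}$, the law of $M$ is $\mu_G-\nu_i$. It has total mass $1-\nu_i(\R)=\rho_i(\R)$, the same as $\rho_i$. The key observation is that for every $t$,
\[
\rho_i((-\infty,t])-(\mu_G-\nu_i)((-\infty,t])=F_i(t)-G(t)\ge0 .
\]
So $\rho_i$ is stochastically smaller than $\mu_G-\nu_i$, for any admissible decomposition. Consequently the comonotone (quantile) coupling of these two measures satisfies $X_i\le M$ almost surely.

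To implement this coupling conditionally on $M$, I use the distributional transform. Take $V$ uniform and independent of $(M,I)$. Let $H_i$ be the distribution function of $\mu_G-\nu_i$, and set $U_i:=H_i(M-)+V\,(H_i(M)-H_i(M-))$, restricted to $\{I\neq i\}$. Then $U_i$ is uniform on $[0,\rho_i(\R)]$. Define $X_i:=Q_i(U_i)$, where $Q_i$ is the generalized inverse of $t\mapsto\rho_i((-\infty,t])$. Because $M=H_i^{-1}(U_i)$ almost surely and $Q_i\le H_i^{-1}$ by the domination above, we get $X_i\le M$.

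The degenerate cases $\rho_i(\R)=0$ and $\nu_i(\R)=0$ are handled trivially. The joint dependence among the different $X_j$ is irrelevant, so the same $V$ can be used for all $j$.

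\emph{Main obstacle.} I expect the delicate step to be this conditional quantile construction in the presence of atoms of $G$. One must verify three things: $U_i$ is exactly uniform under the subprobability $\mu_G-\nu_i$ restricted to $\{I\neq i\}$; the randomized index $I$ is consistent with that restriction; and the inequality $Q_i(U_i)\le M$ holds almost surely, including at atoms. A cleaner alternative is to reduce everything to the pair $(M,I)$ and a single uniform variable via the standard representation on an enlarged probability space.
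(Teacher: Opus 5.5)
Your proposal is correct and follows essentially the same route as the paper: the Radon--Nikodym split $\nu_i=h\,\mu_{F_i}$ of $\mu_G$, a random index with $\PP(M\in B,I=i)=\nu_i(B)$, the cancellation showing the residual distribution functions differ by $F_i-G\ge0$, and the randomized distributional transform of $M$ on $\{I\ne i\}$ followed by the quantile map of $\rho_i$, with one uniform $V$ shared across coordinates. The only difference is cosmetic: you work with the unnormalized subprobabilities $\rho_i$ and $\mu_G-\nu_i$, whereas the paper divides by $1-p_i$ to get probability measures and applies its randomized quantile lemma directly, which settles the atom issues you flag as the main obstacle.
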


The first condition in \ref{eq:feasible} states that the
prescribed maximum is stochastically no smaller than any of
its components.
The second condition follows from the fact that the maximum
of finitely many random variables must be attained by at
least one of them; hence, its probability on any Borel set cannot
exceed the sum of the corresponding marginal probabilities.
What is perhaps surprising is that these two elementary
necessary conditions are already sufficient. In particular,
no further compatibility conditions are required, regardless
of the unspecified dependence structure.
The proof in Section~\ref{sec:proofs} provides an explicit
coupling construction, without continuity assumptions or
approximation arguments.

\section{Existence of couplings with prescribed maximum law}\label{sec3}

Denote by \(\mathcal{P}(\R^d)\) the set of probability measures on (the Borel \(\sigma\)-algebra of) \(\R^d\).
For probability measures $\mu_1,\ldots,\mu_d,\gamma\in \mathcal{P}(\R)$, define
\[
 \Pi_{\max}(\mu_1,\ldots,\mu_d;\gamma):
 =\{\pi\in \mathcal{P}(\R^d):\operatorname{proj}_i{}_\# \pi=\mu_i ~ \text{for all } i,\ (\max)_\#\pi=\gamma\},
\]
where \(\R^d\ni x=(x_1,\ldots,x_d) \mapsto\operatorname{proj}_i(x):= x_i\) denotes the projection onto the \(i\)th component, and \(f_\#\pi(B):=\pi(f^{-1}(B))
=\pi(\{x\in\mathbb R^d:f(x)\in B\})\), \(B\subset \R\) Borel, denotes the pushforward measure of \(\pi\) under a measurable function \(f\colon \R^d\to \R\). Hence, \(\Pi_{\max}\) contains all \emph{couplings} of \(\mu_1,\ldots,\mu_d\) with prescribed maximum distribution.

\citet[Corollary~2.5]{BKLPE2022} consider an optimal transport problem where they optimize a cost functional over the set \(\Pi_{\max}\). The following result is a consequence of Theorem \ref{thm:maximum} and provides a simple characterization of non-emptiness of $\Pi_{\max}$. 

\begin{corollary}[Couplings with given maximum law]\label{cor_max}
Denote by \(F_i\) and \(G\) the distribution functions of \(\mu_i\) and \(\gamma\), respectively. Then the following statements are equivalent:
\begin{enumerate}[label = (\roman*)]
    \item \(\Pi_{\max}(\mu_1,\ldots,\mu_d;\gamma)\ne\varnothing\).
    \item \(G(t)\le F_i(t)\) for all \(i,t\), and \(\gamma\le\sum_{i=1}^d\mu_i\).
\end{enumerate}
\end{corollary}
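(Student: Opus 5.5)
Corollary~\ref{cor_max} is Theorem~\ref{thm:maximum} rewritten in measure-theoretic language, so the plan is to translate between the two formulations rather than build a new coupling. Given $\mu_1,\ldots,\mu_d,\gamma\in\mathcal P(\R)$, let $F_i$ and $G$ be their distribution functions. Then $\mu_{F_i}=\mu_i$ and $\mu_G=\gamma$, because a Borel probability measure on $\R$ is determined by its distribution function. With this identification, condition (ii) of the corollary is literally condition~\ref{eq:feasible} of Theorem~\ref{thm:maximum}. The task therefore reduces to showing that $\Pi_{\max}(\mu_1,\ldots,\mu_d;\gamma)\neq\varnothing$ is equivalent to statement (i) of Theorem~\ref{thm:maximum}.

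For (i)$\Rightarrow$(ii), take $\pi\in\Pi_{\max}$ and the canonical random vector $X=(X_1,\ldots,X_d)=\mathrm{id}$ on the probability space $(\R^d,\mathcal B(\R^d),\pi)$. Then $X_i=\operatorname{proj}_i\circ X$ has law $\operatorname{proj}_i{}_\#\pi=\mu_i$, so $X_i\sim F_i$. Likewise $\max_i X_i$ has law $(\max)_\#\pi=\gamma$, so $\max_i X_i\sim G$. Theorem~\ref{thm:maximum} (i)$\Rightarrow$(ii) then yields the stated conditions.

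For (ii)$\Rightarrow$(i), Theorem~\ref{thm:maximum} provides a random vector $X$ on some probability space $(\Omega,\mathcal A,\PP)$ with $X_i\sim F_i$ and $\max_i X_i\sim G$. Set $\pi:=X_\#\PP\in\mathcal P(\R^d)$. Since $\operatorname{proj}_i{}_\#\pi=(\operatorname{proj}_i\circ X)_\#\PP$ and $(\max)_\#\pi=(\max\circ X)_\#\PP$, the marginal and maximum constraints hold, so $\pi\in\Pi_{\max}$.

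The only technical points are measurability and the identification of measures with distribution functions. Both $\operatorname{proj}_i$ and $\max$ are continuous, hence Borel, so all the pushforwards above are well defined. The identification $\mu_{F}\leftrightarrow F$ is the standard uniqueness theorem for Borel probability measures on $\R$. There is no real obstacle beyond this bookkeeping: all the substance lies in Theorem~\ref{thm:maximum}.
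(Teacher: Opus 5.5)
Your proposal is correct and matches the paper's proof: both reduce the corollary to Theorem~\ref{thm:maximum} by identifying elements of $\Pi_{\max}(\mu_1,\ldots,\mu_d;\gamma)$ with laws of random vectors that have the prescribed marginals and maximum law. You use the canonical vector in one direction and the pushforward in the other. You also spell out the measurability and measure--distribution-function identifications that the paper leaves implicit.
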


\begin{proof}
There exists a probability measure $\pi$ on $\R^d$ with marginals $\mu_1,\ldots,\mu_d$ such that
$(\max)_\#\pi=\gamma$ if and only if there exists a random vector $(X_1,\ldots,X_d)$ satisfying \(X_i\sim F_i\), \(i=1,\ldots,d\), and \(\max_{1\le i\le d}X_i\sim G\). Hence, the assertion follows from Theorem~\ref{thm:maximum}.
\end{proof}

From couplings of general distributions on \(\R\), we now turn to copulas which couple uniform distributions on \([0,1]\).

\section{Existence of copulas with prescribed values along a track}\label{sec4}

A (\(d\)-dimensional) copula is a distribution function \(C\colon [0,1]^d\to [0,1]\) with \emph{uniform marginals} in the sense that \(C(u) = u_i\) for all \(i\in \{1,\ldots,d\}\) and for all \(u = (u_1,\ldots,u_d)\in [0,1]^d\) with \(u_j=1\) for all \(j\ne i\). Copulas describe the dependence structure of multivariate
distributions through Sklar's theorem: For
every $d$-variate distribution function
$F\colon\R^d\to[0,1]$ with marginal distribution
functions $F_1,\ldots,F_d$ there is a copula \(C\) such that
\begin{align}
F(x)=C(F_1(x_1),\ldots,F_d(x_d))
\qquad\text{for all }x=(x_1,\ldots,x_d)\in\R^d.
\label{eq_Sklar}
\end{align}
If the marginal distribution functions are continuous,
the copula $C$ is uniquely determined. Conversely,
any copula $C$ and any univariate distribution functions
$F_1,\ldots,F_d$ define a $d$-variate distribution
function through \eqref{eq_Sklar}.
Thus, Sklar's theorem allows the dependence structure
to be modeled separately from the marginal distributions.
We refer to \cite{DS2016} for a comprehensive
introduction to copula theory.

Quasi-copulas, defined via Theorem~\ref{thm:quasi} below, generalize copulas. They were originally defined as functions that coincide on every track with a copula.
To this end, recall that a \emph{track} is (the image of) a continuous, coordinatewise nondecreasing map $t\mapsto(F_1(t),\ldots,F_d(t))$ from $\I$ to $\I^d$, with each
$F_i(0)=0$ and $F_i(1)=1$. 
Further, note that the measure inequality in Condition \ref{eq:feasible} of Theorem~\ref{thm:maximum} is equivalent to
\begin{equation}
 G(t)-G(s)\le\sum_{i=1}^d(F_i(t)-F_i(s))
 \qquad \text{for all } s < t.
 \label{eq:increments}
\end{equation}
Indeed, the finite signed measure $\sum_i\mu_{F_i}-\mu_G$ is nonnegative if and only if $H:=\sum_iF_i-G$ is nondecreasing. This follows from \(\left(\sum_i\mu_{F_i}-\mu_G\right)((s,t])
=H(t)-H(s)\) and the uniqueness of the associated Lebesgue-Stieltjes measure.

The following existence of copulas with prescribed values along a track is well known and has been established by
different methods; see \citet{CT2001} and
\citet[Section~13.3]{Sempi2017}.
Here, we provide a particularly simple and direct
probabilistic derivation from
Theorem~\ref{thm:maximum}.

\begin{corollary}[Copulas with prescribed values along a track]
\label{cor:extension}
Let $F_1,\ldots,F_d,G:\I\to\I$ be continuous and nondecreasing,
with value $0$ at $0$ and value $1$ at $1$. Suppose that
\begin{align}
 G(t)&\le F_i(t)
 && \text{for all } t\in\I,\ i\in \{1,\ldots,d\},\label{eq:track-upper}\\
 G(t)-G(s)&\le\sum_{i=1}^d(F_i(t)-F_i(s))
 &&\text{for all } 0\le s\le t\le1 .\label{eq:track-increments}
\end{align}
Then there is a $d$-copula $C$ such that
\begin{align}
 C(F_1(t),\ldots,F_d(t))=G(t)\qquad  \text{for all } t\in\I.
 \label{eq:track-extension}
\end{align}
\end{corollary}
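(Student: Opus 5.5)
The plan is to obtain the copula directly from Theorem~\ref{thm:maximum}, applied with $F_1,\ldots,F_d,G$ regarded as distribution functions on $\R$. Extend each of them by $0$ on $(-\infty,0)$ and by $1$ on $(1,\infty)$. Since they are continuous, nondecreasing, vanish at $0$ and equal $1$ at $1$, the extensions are continuous distribution functions of probability measures supported on $[0,1]$.

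The first step is to check condition~\ref{eq:feasible}. Condition \eqref{eq:track-upper} gives $G\le F_i$ on $[0,1]$, and outside $[0,1]$ both sides agree (both are $0$ or both are $1$). For the measure inequality we use the equivalent increment form \eqref{eq:increments}: the function $H=\sum_iF_i-G$ is nondecreasing on $[0,1]$ by \eqref{eq:track-increments}. Outside $[0,1]$ it is constant, equal to $0$ on $(-\infty,0)$ and to $d-1$ on $(1,\infty)$, and these values match $H(0)=0$ and $H(1)=d-1$. Hence $H$ is nondecreasing on all of $\R$. Theorem~\ref{thm:maximum} then yields a random vector $(X_1,\ldots,X_d)$ with $X_i\sim F_i$ and $M:=\max_iX_i\sim G$.

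The second step passes to a copula. Set $U_i:=F_i(X_i)$. Because $F_i$ is continuous, $U_i$ is uniform on $[0,1]$, so the distribution function $C$ of $(U_1,\ldots,U_d)$ is a copula. For $t\in[0,1]$ we need
\[
\{U_i\le F_i(t)\}=\{X_i\le t\}\quad\text{almost surely}.
\]
The inclusion $\supseteq$ follows from monotonicity. For $\subseteq$, suppose $F_i(X_i)\le F_i(t)$ but $X_i>t$. Then $F_i$ is constant on $[t,X_i]$, so $X_i$ lies in an interval of constancy of $F_i$ to the right of $t$. Such intervals have $\mu_{F_i}$-measure zero, and there are only countably many maximal ones, so this event is null. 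Consequently
\[
C(F_1(t),\ldots,F_d(t))=\PP(X_1\le t,\ldots,X_d\le t)=\PP(M\le t)=G(t),
\]
which is \eqref{eq:track-extension}.

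The argument is essentially immediate once Theorem~\ref{thm:maximum} is available. The only points requiring care are the extension of $H$ beyond $[0,1]$ preserving monotonicity and the almost-sure identity of events when $F_i$ has flat pieces. For the latter, an alternative is the generalized inverse representation $X_i=F_i^{-1}(U_i)$, but the null-set argument above is more direct.
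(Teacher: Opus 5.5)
Your proposal is correct and follows the paper's proof: you extend by $0$ and $1$, apply Theorem~\ref{thm:maximum}, and take the copula of $(F_1(X_1),\ldots,F_d(X_d))$. The differences are cosmetic. You spell out the check of condition~\ref{eq:feasible} via monotonicity of $\sum_iF_i-G$ on $\R$, which the paper leaves implicit. You also prove $\{U_i\le F_i(t)\}=\{X_i\le t\}$ almost surely using null intervals of constancy, whereas the paper notes that $\{X_i\le t\}\subseteq\{U_i\le F_i(t)\}$ and that both events have probability $F_i(t)$.
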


\begin{proof}
Extend the functions \(F_i\) and \(G\) by $0$ below $0$ and $1$ above $1$.
Theorem~\ref{thm:maximum} gives $X_i\sim F_i$ with \(\max_i X_i\sim G\).
Then $U_i=F_i(X_i)$ is uniform by continuity of \(F_i\). For any $t$, we have
$\{X_i\le t\}\subseteq\{U_i\le F_i(t)\}$ and both events have
probability $F_i(t)$, so they agree almost surely. The copula $C$
of $U$ therefore satisfies
$C(F_1(t),\ldots,F_d(t))=\Pp(\max_iX_i\le t)=G(t)$. 
\end{proof}

\section{A short proof for the characterization of quasi-copulas}
\label{sec:quasi}

The following theorem is the classical result
of \citet{GQRS1999} and \citet{CT2001}.
It gives a simple analytic version of the original definition \ref{thm:quasi1} of quasi-copulas.
Below, we address the call in \citet[Section~13.3]{Sempi2017} for a \emph{simpler and more transparent} proof where we use
Corollary~\ref{cor:extension} of Theorem \ref{thm:maximum} to establish the nontrivial direction
'$\ref{thm:quasi2}\Rightarrow\ref{thm:quasi1}$'.

\begin{theorem}[Classical characterization of quasi-copulas]
\label{thm:quasi}
For a function $Q:\I^d\to\I$, the following are equivalent.
\begin{enumerate}[label = (\roman*)]
\item \label{thm:quasi1} For every track $T$, there is a $d$-copula $C_T$
 such that $C_T(u)=Q(u)$ for every $u\in T$.
\item \label{thm:quasi2} The function $Q$ has uniform marginals, is nondecreasing in each coordinate, and satisfies
\[
 |Q(u)-Q(v)|\le \sum_{i=1}^d |u_i-v_i|,\qquad u = (u_1,\ldots,u_d),v = (v_1,\ldots,v_d)\in\I^d.
\]
\end{enumerate}
\end{theorem}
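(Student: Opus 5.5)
The plan is to prove the two implications separately, using Corollary~\ref{cor:extension} for the nontrivial direction.

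\emph{(i)$\Rightarrow$(ii).} Every point $u\in\I^d$ lies on some track. For instance, take the polygonal path from $\mathbf 0$ to $u$ and then from $u$ to $\mathbf 1$, suitably parametrized; it is continuous and coordinatewise nondecreasing. Hence $Q(u)=C_T(u)$ for a copula $C_T$, so $Q$ inherits the uniform marginals. For monotonicity and the Lipschitz bound it suffices to treat comparable points. If $u\le v$ coordinatewise, a single track passes through both $u$ and $v$ (go $\mathbf 0\to u\to v\to\mathbf 1$). With the same copula $C_T$ we then get
\[
0\le Q(v)-Q(u)=C_T(v)-C_T(u)\le\sum_i(v_i-u_i),
\]
by the standard copula properties. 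For general $u,v$, compare both points with $w=u\wedge v$ and use the triangle inequality:
\[
|Q(u)-Q(v)|\le (Q(u)-Q(w))+(Q(v)-Q(w))\le\sum_i(u_i-w_i)+\sum_i(v_i-w_i)=\sum_i|u_i-v_i|.
\]
Coordinatewise monotonicity is the special case $u\le v$ differing in one coordinate.

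\emph{(ii)$\Rightarrow$(i).} Fix a track $T$ parametrized by $t\mapsto(F_1(t),\dots,F_d(t))$ on $\I$, and set $G(t):=Q(F_1(t),\dots,F_d(t))$. We will verify the hypotheses of Corollary~\ref{cor:extension}.

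\begin{enumerate}[label=(\alph*)]
\item \emph{Regularity of $G$.} $G$ is continuous, since $Q$ is Lipschitz and the $F_i$ are continuous. It is nondecreasing, since $Q$ is coordinatewise nondecreasing and the $F_i$ are nondecreasing.
\item \emph{Boundary values.} $G(0)=Q(\mathbf 0)=0$, because a quasi-copula with uniform marginals and monotonicity is grounded: $Q(u)\le Q(1,\dots,1,u_i,1,\dots,1)=u_i$. Likewise $G(1)=Q(\mathbf 1)=1$.
\item \emph{Condition \eqref{eq:track-upper}.} The same bound $Q(u)\le u_i$ gives $G(t)\le F_i(t)$.
\item \emph{Condition \eqref{eq:track-increments}.} For $s\le t$, monotonicity of the track and the Lipschitz condition give
\[
G(t)-G(s)\le\sum_i|F_i(t)-F_i(s)|=\sum_i(F_i(t)-F_i(s)).
\]
\end{enumerate}

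Corollary~\ref{cor:extension} then yields a copula $C$ with $C(F_1(t),\dots,F_d(t))=G(t)=Q(F_1(t),\dots,F_d(t))$ for all $t$, that is, $C=Q$ on $T$.

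The main obstacle, namely constructing the copula, is already handled by Corollary~\ref{cor:extension}. The remaining subtle points are minor. In (i)$\Rightarrow$(ii) one must make sure a track can be chosen through any two comparable points, including degenerate cases such as $u=\mathbf 0$ or points on the boundary; concatenating linear segments with a reparametrization onto $\I$ handles this. In (ii)$\Rightarrow$(i), groundedness $Q(u)=0$ whenever some $u_i=0$ must be derived from uniform marginals plus monotonicity, as in step (b), rather than assumed.
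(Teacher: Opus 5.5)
Your proof is correct and follows the paper's argument essentially step by step: for (ii)$\Rightarrow$(i) you set $G=Q\circ F$ and verify the hypotheses of Corollary~\ref{cor:extension}, and for (i)$\Rightarrow$(ii) you use polygonal tracks through $\mathbf 0,u,v,\mathbf 1$. The only cosmetic differences are that you bound $|Q(u)-Q(v)|$ using $u\wedge v$ alone plus the triangle inequality, whereas the paper sandwiches between $u\wedge v$ and $u\vee v$, and that you cite the monotonicity and Lipschitz properties of copulas instead of rederiving them from a random vector; both variants work.
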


\begin{proof}
For the implication \emph{$\ref{thm:quasi2}\Rightarrow\ref{thm:quasi1}$}, we now provide a substantially shorter and simpler proof than the classical approaches
in the literature; see \cite[Section~13.3]{Sempi2017}
for an overview of the original proof strategy.
To this end, fix an arbitrary track
$T \colon t\mapsto F(t)=(F_1(t),\ldots,F_d(t))$ and define
$G(t):=Q(F(t))$.
The boundary conditions and coordinatewise monotonicity
of $Q$ imply \(0\le Q(u)\le\min_{1\le i\le d}u_i\),
and hence 
\begin{align}
    G(t)\le F_i(t) \quad \text{for all } i\in \{1,\ldots,d\}, t\in [0,1].
\end{align}
Moreover, $G$ is continuous and nondecreasing,
with $G(0)=0$ and $G(1)=1$.
For $0\le s\le t\le1$, the Lipschitz condition yields
\[
0\le G(t)-G(s)
\le\sum_{i=1}^d(F_i(t)-F_i(s)).
\]
Hence, by
Corollary~\ref{cor:extension}, there exists a $d$-copula $C_T$
such that
\begin{align*}
    C_T(F(t))=G(t)=Q(F(t)) \quad \text{for all } t\in[0,1]
\end{align*}
which proves \ref{thm:quasi1}.

\smallskip
For completeness, we also prove this direction \emph{$\ref{thm:quasi1}\Rightarrow\ref{thm:quasi2}$} using standard arguments:
Therefore, consider for every $u\in[0,1]^d$ the track \(F_u(s):= 2su\), if \(0\leq s \leq 1/2\), and \(F_u(s):= u + (2s-1)(\mathbf{1}-u)\), if \(1/2 < s \leq 1\).
Then there exists a copula \(C_u\) such that \(C_u(u) = Q(u)\). In particular, for \(u = (u_k)\) with \(u_j=1\) for \(j\ne i\), we have \(Q(u) = C_u(u) = u_i\) since \(C_u\) has uniform marginals,. Hence, also \(Q\) has uniform marginals.

Now let $u,v\in[0,1]^d$ with $u\le v$
coordinatewise. Then, similar as above, we choose a polygonal track now passing
through $\mathbf{0},u,v,\mathbf{1}$ and a copula
$C_{u,v}$ agreeing with $Q$ on this track.
For a random vector $U=(U_1,\ldots,U_d)$ with distribution function $C_{u,v}$, we have
\[
\{U\le v\}\setminus\{U\le u\}
\subseteq
\bigcup_{i=1}^d\{u_i<U_i\le v_i\}.
\]
Since $U$ has uniform marginals, it follows that
\begin{align}\label{eq_mono}
0\le Q(v)-Q(u)
&=C_{u,v}(v)-C_{u,v}(u)
\le\sum_{i=1}^d
   \PP(u_i<U_i\le v_i)
=\sum_{i=1}^d(v_i-u_i).
\end{align}
Consequently, $Q$ is coordinatewise nondecreasing.

Finally, for arbitrary $u,v\in[0,1]^d$, let \(w:= u\wedge v\) and \(z:= u\vee v\). Since \(w\leq u,v\leq z\), \eqref{eq_mono} gives \(Q(w) \leq Q(u)\) and \(Q(v) \leq Q(z)\). Hence, 
\begin{align*}
    \lvert Q(u) - Q(v)\rvert \leq Q(z) - Q(w) \leq \sum_{i=1}^d (z_i-w_i) = \sum_{i=1}^d |u_i-v_i| .
\end{align*}
Thus, $Q$ satisfies the required Lipschitz
condition, proving \ref{thm:quasi2}. 
\end{proof}

\begin{remark}
Theorem~\ref{thm:maximum} also provides a simple solution
to the finite interpolation problem underlying the original
characterization of quasi-copulas in \citet{CT2001}; see also
\citet[Section~13.3]{Sempi2017}. 
Let $\mathbf{0}=u^{(0)}\le\cdots\le u^{(m)}=\mathbf{1}$
coordinatewise, and let $q_0,\ldots,q_m\in\R$ with $q_0=0$ and $q_m=1$.
Then there exists a $d$-copula $C$ satisfying \(C(u^{(k)})=q_k\) for \(k\in \{0,\ldots,m\}\)
if and only if
\[
q_k\le\min_i u_i^{(k)},\qquad
0\le q_{\ell+1}-q_\ell
\le\sum_{i=1}^d
\bigl(u_i^{(\ell+1)}-u_i^{(\ell)}\bigr)
\]
for all \(k\in \{0,\ldots,m\}\) and \(\ell\in \{0,\ldots,m-1\}\). 
Indeed, necessity follows from the boundary,
monotonicity, and Lipschitz properties of copulas.
For sufficiency, piecewise linear interpolation
of the prescribed points and values yields
functions $F_1,\ldots,F_d,G$ satisfying the
assumptions of Corollary~\ref{cor:extension},
which provides the desired copula.
\end{remark}

\section{Proof of Theorem~\ref{thm:maximum}}
\label{sec:proofs}

We make use of the concept of distributional transform as follows.
For a distribution function $F$ on $\R$, denote its
generalized inverse by
\[
F^{-1}(u):=\inf\{x\in\R:F(x)\ge u\},
\qquad u\in(0,1).
\]
Moreover, write $F(y-):=\lim_{x\uparrow y}F(x)$
for the left-side limit of $F$ at $y$.

\begin{lemma}[Randomized quantile coupling]
\label{lem:quantile}
Let $A$ and $B$ be distribution functions on $\R$.
Let $Y\sim B$ and let $V$ be uniformly distributed on
$(0,1)$, independently of $Y$. Then the randomized
distributional transform
\begin{equation}
W:=B(Y-)+V\bigl(B(Y)-B(Y-)\bigr)
\label{eq:randomized-pit}
\end{equation}
is uniformly distributed on $(0,1)$ and satisfies
\[
B^{-1}(W)=Y
\qquad\text{almost surely}.
\]
\end{lemma}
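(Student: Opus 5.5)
The plan is to prove the two claims separately. For uniformity, we compute $\PP(W\le w)$ for fixed $w\in(0,1)$ by conditioning on $Y$. For the almost sure identity, we use elementary properties of the generalized inverse together with the observation that $W$ lies in the ``jump interval'' $[B(Y-),B(Y)]$ of $Y$.

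\textbf{Uniformity.} Fix $w\in(0,1)$. Since $B^{-1}$ and $B$ are related by the Galois inequality $B^{-1}(u)\le x\iff u\le B(x)$, set $y^*:=B^{-1}(w)$. We split according to the position of $Y$ relative to $y^*$.
\begin{itemize}
\item If $Y<y^*$, then $B(Y)<w$ (by the Galois inequality), so $W\le B(Y)<w$.
\item If $Y>y^*$, then $B(Y-)\ge B(y^*)\ge w$. Hence $W\ge w$, and $W>w$ unless $V=0$ or the increment $B(Y)-B(Y-)$ vanishes; in the latter case $W=B(Y-)$, but this can equal $w$ only on a $B$-null event (a level set of a nondecreasing function on points without atoms). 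This case needs a careful check.
\item If $Y=y^*$ with $p:=\PP(Y=y^*)=B(y^*)-B(y^*-)>0$, then conditionally $W$ is uniform on $[B(y^*-),B(y^*)]$, which contains $w$.
\end{itemize}
Summing the contributions gives
\[
\PP(W\le w)=\PP(Y<y^*)+p\,\frac{w-B(y^*-)}{p}=B(y^*-)+w-B(y^*-)=w .
\]
If $p=0$, then continuity of $B$ at $y^*$ gives $\PP(Y<y^*)=B(y^*)=w$, and $\PP(Y=y^*)=0$, so again $\PP(W\le w)=w$. Alternatively, and more cleanly, I could cite the known result on the distributional transform (Rüschendorf); but a self-contained conditioning argument as above is what I would write.

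\textbf{Quantile identity.} Outside a null set we have $V\in(0,1)$ and $B(Y-)<B(Y)$ or not; in either case $B(Y-)\le W\le B(Y)$.
\begin{itemize}
\item From $W\le B(Y)$, the Galois inequality yields $B^{-1}(W)\le Y$.
\item For the reverse inequality, suppose $B^{-1}(W)<Y$. Then there is $x<Y$ with $B(x)\ge W$, and so $W\le B(x)\le B(Y-)$. Combined with $W\ge B(Y-)$, this forces $W=B(Y-)=B(x)$, which happens only when $V=0$ (probability zero) or when $Y$ is not an atom. In the non-atom case, $B$ would be constant on $[x,Y)$, so $Y$ lies strictly to the right of a flat stretch of $B$ starting at level $B(Y-)$.
\end{itemize}

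The main obstacle is to show that this last event has probability zero. I would do this by noting that $\{y: \exists x<y,\ B(x)=B(y-)\}$ is a countable union of intervals $(a,b]$ with $\mu_B((a,b))=0$, one for each flat level of $B$. Each such set carries only the mass of a possible atom at its right endpoint $b$, and that atom is excluded since it falls in the atom case, where $V\in(0,1)$ gives $W>B(Y-)$. Hence $\PP(B^{-1}(W)\neq Y)=0$, as claimed. The same null-set argument also handles the delicate boundary case in the uniformity computation, so I expect this flat-stretch bookkeeping, rather than the main computation, to require the most care.
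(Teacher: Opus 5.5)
Your proof is correct, but it takes a different route from the paper. You verify both claims directly: uniformity by computing $\PP(W\le w)$ with a case split around $y^*=B^{-1}(w)$, and $B^{-1}(W)=Y$ via the Galois inequality plus a null-set argument for flat stretches of $B$. The null-set argument holds up. Each flat level of $B$ contributes an interval $(a,b]$ whose interior is $\mu_B$-null. Its endpoint $b$ is either a non-atom, hence null, or an atom, where $V>0$ forces $W>B(Y-)$. The boundary event $\{Y>y^*,\,W=w\}$ in your uniformity step lies in the same null event (take $x=y^*$).

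The paper never computes anything for $(Y,W)$ directly. It takes an auxiliary $U$ uniform on $(0,1)$ and sets $Z:=B^{-1}(U)$, so that $B(Z-)\le U\le B(Z)$. Hence, given $Z$, $U$ is uniform on the jump interval at atoms and equal to $B(Z)$ otherwise, which is exactly the conditional law of $W$ given $Y$. Thus $(Y,W)\stackrel{d}{=}(Z,U)$, and both claims follow at once: uniformity from $U$, and $\PP(B^{-1}(W)=Y)=\PP(B^{-1}(U)=Z)=1$ because the latter identity holds by definition.

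The paper's route removes the flat-stretch bookkeeping entirely. $Z$ avoids the bad set by construction, and $Y$ avoids it almost surely through equality in law. Your route is fully explicit and shows precisely where the almost-sure qualifier comes from. It also carries out the Galois-inequality checks that the paper's brief description of the conditional law of $U$ given $Z$ leaves implicit, e.g.\ $\{Z=z\}=\{B(z-)<U\le B(z)\}$ up to a null set.
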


\begin{proof}
For completeness, we give the short proof of the construction in \cite{Rueschendorf2009}: Let $U$ be uniform on $(0,1)$
and put $Z=B^{-1}(U)$.
Then $Z\sim B$ and
\[
B(Z-)\le U\le B(Z)
\qquad\text{almost surely}.
\]
At every atom $z$ of $B$, the conditional distribution
of $U$ given $Z=z$ is uniform on
$(B(z-),B(z))$. On the nonatomic part, we have
$U=B(Z)$ almost surely. Consequently,
$(Z,U)$ and $(Y,W)$ have the same distribution.
This proves that $W$ is uniform and
$B^{-1}(W)=Y$ almost surely.
\end{proof}

\begin{proof}[Proof of Theorem~\ref{thm:maximum}.]
Write $\mu_i=\mu_{F_i}$ and $\gamma=\mu_G$. 
\emph{Necessity:} Let \(X_i\sim F_i\) and $M:=\max_iX_i\sim G$. Then $X_i\le M$ and thus $G(t)\le F_i(t)$.
Moreover, for every Borel set $B$,
\[
 \{M\in B\}\subseteq\bigcup_{i=1}^d\{X_i\in B\},
\]
because the maximum is attained by at least one coordinate. Hence
$\gamma\le\sum_i\mu_i$, proving \ref{eq:feasible}.

\emph{Sufficiency:} 
Suppose \ref{eq:feasible} holds. In the first step, we decompose the prescribed maximum law \(\gamma=\mu_G\) into \(d\) subprobability measures, each assigned to one of the coordinates. In the second step, we construct random variables \(X_1,\ldots,X_d\) with the prescribed marginal distributions \(X_i\sim F_i\) such that \(\max_i X_i\sim G\).

\emph{Step 1:} Let $\lambda:=\sum_i\mu_i$. Since \(\gamma\leq \lambda\), we have \(\gamma\ll \lambda\). By the Radon--Nikodym theorem, there exists a measurable \(h\geq 0\) such that $h=d\gamma/d\lambda$. In particular, \(\gamma\leq \lambda\) implies $0\le h\le1$ $\lambda$-almost surely.
Set
\begin{align}
 \nu_i(B):=\int_B h\,d\mu_i,
 \qquad p_i:=\nu_i(\R).
 \label{eq:allocation}
\end{align}
Clearly, $\nu_i\le\mu_i$. Further, $\sum_i\nu_i(B) = \int_B h \,d\left(\sum_i\mu_i\right) = \int_B h \, d\lambda = \gamma(B)$ and thus \(\sum_i\nu_i = \gamma\).
Moreover, since \(\gamma\) is a probability measure, we have \(p_i\geq 0\) and \(\sum_{i=1}^d p_i = 1\).
Now, we construct a probability measure \(\PP\) on \(\Omega = \R\times \{1,\ldots,d\}\) by
\begin{align}\label{def:p_i}
    \PP(B\times \{i\}) := \nu_i(B)
\end{align}
for every Borel set \(B\subset \R\). Indeed, \(\PP\) is a probability measure as \(\sum_i \nu_i(\R) = \gamma(\R) = 1\). Now, define the random variables \(M(x,i):= x\) and \(J(x,i):= i\).
Then, 
\begin{align}\label{eq_nu_i_prop}
    \PP(M\in B, J=i) = \nu_i(B)
\end{align}
for every Borel set $B$. In particular, $M\sim G$ and
$\Pp(J=i)=p_i$.

\emph{Step 2:} We now construct random variables \(X_1,\ldots,X_d\) with \(X_i\sim F_i\) such that \(X_J=M\) and \(X_i\le M\) for all \(i\). This ensures that \(M=\max_i X_i\) almost surely.

For every index \(i\) with $p_i<1$, we define the (conditional) probabilities
\begin{align}
    \beta_i(B)&:= \frac{\gamma(B) - \nu_i(B)}{1-p_i} = \frac{\PP(M\in B, J\ne i)}{1-p_i} = \PP(M\in B\mid J\ne i) ,\\
    \eta_i(B) &:=\frac{\mu_i(B)-\nu_i(B)}{1-p_i}.
\end{align}
Indeed, \(\beta_i\) and \(\eta_i\) are probability measures since, on the one hand, \(\gamma \geq \nu_i\) and \(\mu_i\geq \nu_i\). On the other hand, \(\beta_i(\R) = 1 = \eta_i(\R)\) since \(\gamma(\R) - \nu_i(\R) = 1-p_i\) and \(\mu_i(\R) - \nu_i(\R) = 1-p_i\) using \eqref{eq:allocation}.

Denote by \(A_i\) and \(B_i\) the distribution functions of \(\eta_i\) and \(\beta_i\), respectively.
Writing $H_i(t):=\nu_i((-\infty,t])$, we obtain
\begin{align}
A_i(t)-B_i(t)
&=\frac{F_i(t)-H_i(t)}{1-p_i}
 -\frac{G(t)-H_i(t)}{1-p_i}
=\frac{F_i(t)-G(t)}{1-p_i}\ge0,
\label{eq:cancellation}
\end{align}
where the inequality follows from \ref{eq:feasible}.
Thus, $A_i\ge B_i$ pointwise, which implies
\begin{align}\label{eq:A_iB_i_inverse}
    A_i^{-1}(u)\le B_i^{-1}(u),
\qquad u\in(0,1).
\end{align}

We now construct the random variables $X_1,\ldots,X_d$
by quantile coupling. To this end, let $V$
be a random variable uniformly distributed
on $(0,1)$, independent of $(M,J)$. Such a random variable exists after enlarging the underlying probability space to \(\Omega\times [0,1]\) equipped with the product measure \(\PP\otimes \lambda_{[0,1]}\), where \(\lambda_{[0,1]}\) is the Lebesgue measure on \([0,1]\).
Then define
\[
X_i:=
\begin{cases}
M, & \text{if } J=i,\\[1mm]
A_i^{-1}(W_i), & \text{if } J\ne i,
\end{cases}
\]
where, on the event $\{J\ne i\}$, \(W_i:=
B_i(M-)+V\bigl(B_i(M)-B_i(M-)\bigr)\).

Since $M$ has conditional distribution function $B_i$
on $\{J\ne i\}$, Lemma~\ref{lem:quantile} implies that
$W_i$ is uniformly distributed on $(0,1)$
on $\{J\ne i\}$, and thus
\[
B_i^{-1}(W_i)=M
\qquad\text{almost surely on }\{J\ne i\}.
\]
Moreover, by \eqref{eq:A_iB_i_inverse}, we have
\[
X_i=A_i^{-1}(W_i)
\le B_i^{-1}(W_i)=M
\qquad\text{almost surely on }\{J\ne i\}.
\]
Since $X_i=M$ on $\{J=i\}$, we obtain
$X_i\le M$ almost surely.

It remains to verify the prescribed marginal distributions.
By Lemma~\ref{lem:quantile}, the conditional distribution
of $X_i$ given $J\ne i$ is $\eta_i$, with distribution
function $A_i$. Therefore, for every $t\in\R$,
\begin{align*}
\PP(X_i\le t)
&=\PP(X_i\le t,J=i)
  +\PP(X_i\le t,J\ne i)\\
&=\PP(M\le t,J=i)
  +(1-p_i)\, \PP(X_i\leq t\mid J\ne i)\\
&=H_i(t)+(1-p_i)A_i(t)\\
&=H_i(t)+F_i(t)-H_i(t)\\
&=F_i(t),
\end{align*}
where we use \eqref{eq_nu_i_prop} for the third equality.
Thus, $X_i\sim F_i$ whenever $p_i<1$.

Finally, if there is an index $i$ with $p_i=1$, then \(\mu_i = \nu_i = \gamma\) and, by \eqref{eq_nu_i_prop}, \(J = i\) almost surely. Hence, setting $X_i=M$
also yields $X_i\sim F_i$ in this case.

We have therefore constructed random variables
$X_1,\ldots,X_d$ with \(X_i\sim F_i\) such that $X_i\le M$ for every $i$
and $X_J=M$ almost surely. It follows that \(\max_{1\le i\le d}X_i=M\sim G\),
which completes the proof. 
\end{proof}

\section*{Acknowledgements}

During the preparation of this manuscript, the author used ChatGPT (OpenAI) for assistance with mathematical reasoning, including the development and verification of the constructive coupling argument, as well as literature searches, exposition, and language editing. All AI-assisted suggestions were critically assessed and revised by the author, who takes full responsibility for the mathematical correctness, originality, and content of the manuscript.
This research was funded in whole or in part by the
Austrian Science Fund (FWF) [10.55776/PAT1669224], project
\emph{Stochastic orders for functional dependence}.

\bibliographystyle{plainnat}
\bibliography{Quasi-Copulas}

\end{document}